\newtheorem{theorem}{Theorem}[section]
\newtheorem{proposition}{Proposition}[section]
\newtheorem{lemma}{Lemma}[section]
\newcommand{\bb}{\begin{equation}}
\newcommand{\ee}{\end{equation}}
\newcommand{\ba}{\begin{array}}
\newcommand{\ea}{\end{array}}
\numberwithin{equation}{section}
\title{On the Cauchy Problem for the Dispersion Generalized Camassa-Holm Equation}
\author[1 ] {Nesibe Ayhan}
\author[2 ]{Nilay Duruk Mutluba\c{s}}
\affil[1]{Karl-Franzens-Universität Graz \\Institute for Mathematics and Scientific Computing\\ Graz, Austria\\ 
\texttt{nesibe.ayhan@uni-graz.at}}
\affil[2]{Sabanc{\i} University \\Faculty of Engineering and Natural Sciences \\Istanbul, Turkey\\
\texttt{nilay.duruk@sabanciuniv.edu}}
\begin{document}

\maketitle

\begin{center}

\textbf{Abstract}
\end{center}

In this paper, we establish local well-posedness of the Cauchy problem for a recently proposed dispersion generalized Camassa-Holm equation by using Kato's semigroup approach for quasi-linear evolution equations. We show that  for initial data in the Sobolev space $H^{s}(\mathbb{R})$ with $s>\frac{7}{2}+p$, the  Cauchy problem is locally well-posed, where $p$ is an even real number determined by the order of the positive differential operator $L$ corresponding to the dispersive effect added to the Camassa-Holm equation.

\textbf{Keywords:}{ Camassa-Holm  equation, Dispersion, Local well-posedness, Semigroup Theory}

{\bf MSC classification 2010:} {Primary: 35G25, 35L30, 47D60; Secondary: 35B65, 47D03. }



\section{Introduction}
The nonlinear dispersive wave equation
\begin{equation}\label{eqn1}
    u_{t} - u_{xxt} + 3uu_{x} = 2u_{x}u_{xx} + uu_{xxx},
\end{equation}
was introduced by Camassa and Holm \cite{camassa} to model the unidirectional propagation of shallow water waves over a flat bottom. Here, \(u(x, t)\) represents the fluid velocity at time \(t\) and in the spatial direction \(x\). The equation is known as the Camassa–Holm (CH) equation, whose various generalizations have appeared in the literature in recent years. These generalizations are usually done qualitatively on the structure of the equation, without paying much attention on the physical meaning and derivation. 

The Korteweg deVries (KdV) and Benjamin-Bona-Mahony (BBM) equations are canonical wave equations which characterizes unidirectional dispersive wave propagation in continuum having different physical properties. Although the CH equation was proposed primarily to model propagation of waves of moderate amplitude in shallow water, it is expected to have the equation arising as a model equation in different continuum where dispersive and nonlinear affects are balanced properly. As a matter of fact there are studies in this direction (for instance, see \cite{dai} about the nonlinear waves propagating along a  circular cylindrical rod composed of a compressible elastic  material and \cite{chen} about the nonlinear waves propagating in  a pre-stressed, thin elastic plate composed of a compressible hyperelastic material). Since linear dispersive affect can occur in different forms for different continuum due to various reasons such as  nonlocal effects, inhomogeneity, anisotropy, multidimensionality, etc and this changes the inertia term $u_t-u_{xxt}$, having generalized form of inertia term, writing the corresponding equation and analyzing the new model are interesting research problems. Therefore, in this paper, we focus on the dispersive nature of CH equation and analyze how the behavior of the solutions change when the dispersive effect is generalized.

When the terms in the CH equation are examined in detail, the \(uu_{x}\) term denotes nonlinear steeping, the \(u_{xxt}\) term denotes the linear dispersion effect, and the \(2u_{x}u_{xx}+uu_{xxx}\) terms denote the nonlinear dispersion effect. When the momentum density \(m =(1-\partial_{x}^2)u\) is defined, the CH equation becomes
\begin{equation}\label{md-gen}
        m_{t}+m_{x}u+2mu_{x}=0.
\end{equation}

There are generalizations of the CH equation for different momentum density forms in the literature. Important examples of these can be given as follows:\\
\\
\((i)\)\ Hunter and Saxton in \cite{hunter1991dynamics} considered \(m=-\partial_{x}^{2}u\).\\
\\
\((ii)\)\ Holm et al. in \cite{holm2009singular} considered 
   $m=(1-\alpha^{2}\partial_{x}^{2})u$, where \(\alpha\) is a constant.\\
\\
\((iii)\)\ Khesin et al. in \cite{khesin2008generalized} introduced a \(\mu\)-version of Camassa-Holm equation as follows
\begin{center}
    \(m_{t}+2mu_{x}+m_{x}u=0,\ \ \ \ m=(\mu-\partial_{x}^{2})u\),
\end{center}

where $u(x,t)$ is a time-dependent function on the unit circle $\mathbb{S}=\mathbb{R}/\mathbb{Z}$ and \mbox{\(\mu(u)=\int_{\mathbb{S}}u dx\)} denotes its mean. This equation describes evolution of rotators in liquid crystals with external magnetic field and self-interaction. It is also studied in \cite{deng2020global}, \cite{fu2012blow}, \cite{wang2022blow}, \cite{Yamane2020-nk}. \\ For the periodic case, Wang \cite{wang2018well} considered $m=\mu(u)-u_{xx}+u_{xxxx}$. Moreover, Wang studied the modified \(\mu\)-version of Camassa-Holm equation in \cite{wang2016non} as follows
\begin{center}
    \(m_{t}+(2\mu(u)u-u_{x}^2)m)_{x}=0\).
\end{center}
\((iv)\)\ Wang considered \(m=\mu(u)+u_{xxxx}\) in \cite{wang2017-za}.\\
\\
\((v)\)\ Ding et al. considered \(m=u-u_{xx}+u_{xxxx}\) in \cite{ding2017lipschitz}.\\
\\
\((vi)\)\ There are also higher order forms of the CH equation, where \(m=(1-\partial_{x}^2)^{k}u\) for positive integer \(k\) \cite{constantin2003geodesic}, which describe exponential curves of the manifold of smooth orientation-preserving diffeomorphisms of the unit circle in the plane.  Studies for \(k=2\) can be found in \cite{mclachlan2009well}, \cite{mu2011well}, \cite{reyes2021pseudo}.\\
\\
\((vii)\)\ For \(r\geq 1\), Camassa-Holm system with two components, where \(m=(1-\partial_{x}^2)^{r}u\) is studied in \cite{chen2017well}.\\

The common feature of the examples in the literature is that they put \mbox{different} effects on linear and non-linear dispersive terms to observe the results. In this paper, our main aim is to study
\begin{equation}\label{md-gen}
    m_{t}+bm_{x}u+amu_{x}=0,\quad a,b >0,
\end{equation}
with the following form of momentum density:
\begin{equation}\label{new-md}
       m=(1-L\partial_{x}^2)u.
\end{equation}

\noindent
Here, $L$ is a general differential operator in spatial variable $x$ whose order is an even real number \(p\). With this momentum density, the dispersive effect in (\ref{eqn1}) is generalized in (\ref{md-gen}).
Note that  choosing \(L\) as the identity operator with \(a=2\) and \(b=1\), (\ref{md-gen}) corresponds to the CH equation given by \((\ref{eqn1})\). Furthermore, choosing $L$ as $\alpha^2$ multiple of identity operator corresponds to the example \((ii)\), as $Id-\partial_x^2$ corresponds to the example \((v)\), as $2-\partial_x^2$ corresponds to the example \((vi)\).\\

The CH equation with the generalized dispersion feature defined in (\ref{new-md}) has been proposed for the first time in the literature. In the equation (\ref{md-gen}),  both the linear and the non-linear dispersion effects have changed. In terms of qualitative analysis, this makes a big difference since $L$ operator is given in closed form and applies also on nonlinear terms in the equation. 

This paper presents the dispersion generalized Camassa-Holm equation given by (\ref{md-gen})-(\ref{new-md}) and proves local well-posedness of the solutions for the corresponding Cauchy problem. It can be considered as a non-local and nonlinear dispersive partial differential equation and a mathematical generalization of the classical Camassa-Holm equation rather than a physical generalization. We prove that the Cauchy problem is locally well-posed on the real line for the initial data in Sobolev space $H^{s}(\mathbb{R})=H^s$, \(s> \frac{7}{2}+p\). The even real \mbox{number} \(p\) is the order of the general differential operator \(L\) which appears in closed form. 

Our proof is based on Kato's semigroup approach for quasilinear equations. For this reason, in Section \ref{lwp} we present a short review of the theorem we rely on and establish local well-posedness for the dispersion generalized Camassa-Holm equation. We make use of commutator operators to obtain a \mbox{suitable} form of the equation to use Kato's semigroup approach. Main reason for rewriting is that \(L\) is in closed form among the nonlinear terms in the equation as well and usual \mbox{differentiation} rules do not apply. 


In Section  \ref{comp}, we compare the results for Camassa-Holm type equations with those of the dispersion generalized Camassa-Holm equation. The changes in the dispersive effect, the differences in their non-local forms, the initial data classes chosen for the corresponding Cauchy problems are discussed.

We end the paper with Section \ref{open} in which we provide open problems to be \mbox{discussed}. According to mathematical analysis questions for the Camassa-Holm equation \mbox{appearing} in the literature, we can say that further qualitative analysis is also possible for the dispersion generalized Camassa-Holm equation, such as  global well-posedness and finite time blow-up.

\section{Local Well-posedness} \label{lwp}
\subsection{Semigroup Approach}\label{Katotheory}

\noindent
Consider the abstract quasi-linear evolution equation in the Hilbert space $X$:
  \begin{equation}
  \label{qlee}
     u_t +A(u)u = f(u), ~~~~t\geq 0,~~~~~u(0)=u_0.
  \end{equation}
Let $Y$ be a second Hilbert space such that $Y$ is continuously and densely injected into $X$ and let $S:Y\rightarrow X$ be a topological isomorphism. Assume that

\begin{itemize}
\item[(A1)]  For any given $r>0$ it holds that for all $u \in  \mathrm B_r(0) \subseteq Y$ (the ball around the origin in $Y$ with radius $r$), the linear operator $A(u)\colon X \to X$ generates a strongly continuous semigroup $T_u(t)$ in $X$ which satisfies
\begin{equation*}
\| T_u(t) \|_{\mathcal L(X)} \leq \mathrm e^{\omega_r t} \quad \text{for all} \quad t\in [0,\infty)
\end{equation*}
for a uniform constant $\omega_r > 0$.
\item[(A2)] $A$ maps $Y$ into $\mathcal L(Y,X)$, more precisely the domain $D(A(u))$ contains $Y$ and the restriction $A(u)|_Y$ belongs to $\mathcal L(Y,X)$ for any $u\in Y$. Furthermore $A$ is Lipschitz continuous in the sense that for all $r>0$ there exists a constant $C_1$ which only depends on $r$ such that
\begin{equation*}
\| A(u) - A(v) \|_{\mathcal L(Y,X)} \leq C_1 \, \|u-v\|_X 
\end{equation*}
for all $u,~v$ inside $\mathrm B_r(0) \subseteq Y$.
\item[(A3)] For any $u\in Y$ there exists a bounded linear operator $B(u) \in \mathcal L(X)$ satisfying $B(u) = S A(u) S^{-1} - A(u)$ and $B \colon Y \to \mathcal L(X)$ is uniformly bounded on bounded sets in $Y$. Furthermore for all $r>0$ there exists a constant $C_2$ which depends only on $r$ such that  
\begin{equation*}
\| B(u) - B(v)\|_{\mathcal L(X)} \leq C_2 \, \|u-v\|_Y
\end{equation*}
for all $u,~v \in \mathrm B_r(0)\subseteq Y$.
\item[(A4)] 
For all $t\in[0,\infty)$, $f$ is uniformly bounded on bounded sets in $Y$. Moreover, the map $f\colon Y \to Y$ is locally $X$-Lipschitz continuous in the sense that for every $r>0$ there exists a constant $C_3>0$, depending only on $r$, such that
\begin{equation*}
\| f(u) - f(v)\|_{X} \leq C_3 \, \|u-v\|_X \quad \text{for all} \; u,~v \in \mathrm B_r(0) \subseteq Y
\end{equation*}
and locally $Y$-Lipschitz continuous in the sense that for every $r>0$ there exists a constant $C_4>0$, depending only on $r$, such that
\begin{equation*}
\| f(u) - f(v)\|_{Y} \leq C_4 \, \|u-v\|_Y \quad \text{for all} \; u,~v \in \mathrm B_r(0) \subseteq Y.
\end{equation*}
\end{itemize}

\begin{theorem} \cite{KatoI}
\label{kato}
Assume that (A1)-(A4) hold. Then for given $u_0\in Y$, there is a maximal time of existence $T>0$, depending on $u_0$, and a unique solution $u$ to (\ref{qlee}) in $X$ such that
\begin{displaymath}
u=u(u_0,.)\in C([0,T),Y)\cap C^1([0,T),X).
\end{displaymath}
Moreover, the solution depends continuously on the initial data,\\
i.e. the map $u_0\rightarrow u(u_0,.) $ is continuous from $Y$ to \mbox{$C([0,T),Y)\cap C^1([0,T),X)$}.
\end{theorem}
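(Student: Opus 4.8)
The plan is to prove existence via a linearization-and-iteration scheme, reducing the quasilinear problem to a sequence of linear nonautonomous problems whose solutions are controlled uniformly and then shown to converge. Given a candidate $u^{(n)} \in C([0,T],Y)$, I would define $u^{(n+1)}$ as the solution of the linear equation
\begin{equation*}
u^{(n+1)}_t + A(u^{(n)}) \, u^{(n+1)} = f(u^{(n)}), \qquad u^{(n+1)}(0) = u_0,
\end{equation*}
starting from $u^{(0)}(t) \equiv u_0$. The argument then splits into three tasks: (i) make sense of each linear problem and solve it; (ii) obtain bounds that are uniform in $n$ on a time interval $[0,T]$ independent of $n$; (iii) pass to the limit, identify the limit as a solution, and prove uniqueness and continuous dependence.

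For task (i), I would appeal to the theory of linear evolution equations with time-dependent generators. Fixing the coefficient, the family $\{A(u^{(n)}(t))\}_{t \in [0,T]}$ must be shown to be a stable family of generators in $X$: assumption (A1) supplies the uniform bound $\|T_u(t)\|_{\mathcal L(X)} \le \mathrm e^{\omega_r t}$ on bounded sets of $Y$, which yields the stability estimate for finite products of resolvents. Assumption (A2) gives norm-continuity of $t \mapsto A(u^{(n)}(t))$ as a map into $\mathcal L(Y,X)$, and the isomorphism $S$ together with the intertwining identity in (A3), $S A(u) S^{-1} = A(u) + B(u)$ with $B(u) \in \mathcal L(X)$, shows that $Y$ is an admissible invariant subspace for the evolution. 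These are exactly the hypotheses of Kato's construction of a unique evolution operator $U(t,s)$ on $X$ that restricts to a strongly continuous two-parameter family on $Y$. The inhomogeneous problem is then solved by the Duhamel formula $u^{(n+1)}(t) = U(t,0) u_0 + \int_0^t U(t,\tau) f(u^{(n)}(\tau)) \, d\tau$, and the boundedness of $f$ on bounded sets from (A4) keeps the right-hand side in $Y$.

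For task (ii), the key quantity is the $Y$-norm of the iterates. Differentiating $\|S u^{(n+1)}\|_X^2$ and commuting $S$ through $A(u^{(n)})$ produces the operator $B(u^{(n)})$ from (A3); because $B$ is uniformly bounded on bounded sets of $Y$, a Gronwall argument yields an a priori $Y$-bound on $u^{(n+1)}$ valid on a time interval $[0,T]$ that can be chosen independent of $n$, provided the iterates remain in a fixed ball $\mathrm B_R(0) \subseteq Y$. To close the iteration I would estimate the difference $u^{(n+1)} - u^{(n)}$ in the weaker $X$-norm, using the Lipschitz bounds on $A$ from (A2) and on $f$ from (A4); a Gronwall estimate then shows that $\{u^{(n)}\}$ is Cauchy in $C([0,T],X)$. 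Combining the uniform $Y$-bound with convergence in $X$ and interpolation gives convergence in every intermediate topology, and a standard weak-continuity plus lower-semicontinuity argument upgrades the limit $u$ to $C([0,T],Y) \cap C^1([0,T],X)$.

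Uniqueness and continuous dependence on $u_0$ follow the same $X$-energy estimate: for two solutions, or two choices of initial data, the difference satisfies a linear differential inequality whose Gronwall integration controls $\|u-v\|_X$ by $\|u(0)-v(0)\|_X$, and continuity of $u_0 \mapsto u$ into the $Y$-topology is recovered by a Bona--Smith type approximation argument. The main obstacle is task (ii): the construction of an evolution operator that is simultaneously well-behaved on $X$ and on $Y$ hinges entirely on the commutator structure encoded in (A3), and obtaining $Y$-bounds that are uniform in the iteration index $n$ --- rather than merely finite for each fixed $n$ --- is the delicate point, since naive differentiation of the $Y$-norm loses derivatives that only the boundedness of $B(u) = S A(u) S^{-1} - A(u)$ can recover.
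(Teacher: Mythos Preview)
The paper does not prove this theorem. Theorem~\ref{kato} is stated with the citation \cite{KatoI} and is quoted as a known result from Kato's 1975 Lecture Notes paper; the authors' contribution is only to verify the hypotheses (A1)--(A4) for their particular operator $A(u)$ and conclude local well-posedness via Theorem~\ref{main}. So there is no ``paper's own proof'' to compare your proposal against.

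That said, your sketch is essentially the correct outline of how Kato proves the result in \cite{KatoI}: one freezes the coefficient to obtain a linear nonautonomous problem, uses (A1), (A2), and the intertwining relation in (A3) to build a two-parameter evolution system $U(t,s)$ that is strongly continuous on both $X$ and $Y$, iterates via Duhamel, obtains uniform $Y$-bounds by commuting $S$ through $A$ and invoking the boundedness of $B(u)$, and closes in $X$ using the Lipschitz estimates from (A2) and (A4). Your identification of the uniform-in-$n$ $Y$-bound as the delicate step is accurate. If the intent of the exercise was to reconstruct Kato's argument, your proposal is on the right track; if the intent was to reproduce what the present paper does, then the answer is simply that the paper takes this theorem as a black box and does not attempt to prove it.
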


\subsection{Local Well-posedness Theorem}

\noindent
In this subsection, we apply Kato's semigroup approach to establish local \mbox{well-posedness} for the Cauchy problem associated to the generalized Camassa-Holm equation:
\begin{equation}
\begin{cases}
        u_{t}-L\partial^2_{x}u_{t}+(a+b)uu_{x}-au_{x}L\partial^2_{x}u-buL\partial^2_{x}u_{x}=0, & t>0, \ x\in\mathbb{R},\label{2.2}\\
u(x,0)=u_{0}(x), & x\in \mathbb{R},\\
\end{cases}
\end{equation}
where \(a\) and \(b\) are positive constants, \(L\) is a positive operator with even order \(p\). Note that we rewrite the equation (\ref{md-gen}) for which (\ref{new-md}) is valid. To construct a non-local form of this equation, we use the usual commutator of two operators $[.,.]$:
\begin{center}
    \([L\partial^2_{x},u]u_{x}=L\partial^2_{x}(uu_{x})-uL\partial^2_{x}u_{x}\).
\end{center}
Then,
\begin{align*}
    (a+b)[L\partial^2_{x}, u]u_{x} &=\ (a+b)L\partial^2_{x}(uu_{x})-(a+b)uL\partial^2_{x}u_{x}&\\
    \\
    &=\ (a+b)L\partial^2_{x}(uu_{x})-auL\partial^2_{x}u_{x}-buL\partial^2_{x}u_{x}&.\\
\end{align*}
So, we can write
\begin{center}
    \((a+b)uu_{x}-buL\partial^2_{x}u_{x}=(a+b)(1-L\partial^2_{x})(uu_{x})+(a+b)[L\partial^2_{x}, u]u_{x}+auL\partial^2_{x}u_{x}\).
\end{center}
Then, we have
\begin{center}
    \((1-L\partial^2_{x})u_{t}+(a+b)(1-L\partial^2_{x})(uu_{x})+(a+b)[L\partial^2_{x}, u]u_{x}+auL\partial^2_{x}u_{x}-au_{x}L\partial^2_{x}u=0\).
\end{center}
Also, it can be seen that
\begin{align*}
    auL\partial^2_{x}u_{x}-au_{x}L\partial^2_{x}u &= a(uL\partial^2_{x}u_{x}-u_{x}L\partial^2_{x}u)\\
    \\
    &=a(uL\partial^2_{x}u_{x}-(L\partial^2_{x}u)u_{x})\\
    \\
    &=a(uL\partial^2_{x}-(L\partial^2_{x}u))u_{x}\\
    \\
    &=-a[L\partial^2_{x},u]u_{x}.
\end{align*}
Then, the equation (\ref{2.2}) becomes
\begin{center}
    \((1-L\partial^2_{x})u_{t}+(a+b)(1-L\partial^2_{x})(uu_{x})+b[L\partial^2_{x}, u]u_{x}=0\).
\end{center}
With \( \Gamma^{s} = (1-L\partial^2_{x})^{s/p+2}\), this equation takes the following quasi-linear form:
\begin{center}
    \(u_{t}+A(u)u=f(u)\),
\end{center}
where
\begin{center}
    \begin{equation}
        A(u)=(a+b)u\partial_{x}+b\Gamma^{-(p+2)}[L\partial^2_{x}, u]\partial_{x}=r(u)\partial_{x}\label{A},
    \end{equation}
\end{center}
and
\begin{center}
\begin{equation}
    f(u)=0\label{f}.
\end{equation}
\end{center}
Here, notice that the operator \(L\) is in a closed form, and even though there are more than one possible way of writing non-local form of an equation, the form where we collect nonlinear effects in the operator \(A(u)\), as above, holds and serves for our purpose. Furthermore, recalling the approach, we choose the spaces $X:=(H^{s-1},||.||_{s-1})$, $Y:=(H^{s},||.||_{s})$ and consider the topological isomorphism $S\coloneqq\Gamma=(1-L\partial^2_{x})^{1/p+2}\colon Y\to X$ between these spaces, which defines an isometry, 
i.e.~$\| \Gamma u \|_{s-1} = \|u\|_s$ for all $u\in H^{s}$.

\noindent
Hence, the main result of this paper is the following:

\begin{theorem}\label{main}
Assume that assumptions \((A1)-(A4)\) hold for (\ref{2.2})-(\ref{f}). Let $u_{0}\in H^s$, $s>\frac{7}{2}+p$ be given. Then, there exists a maximal time of existence \(T>0\), depending on \(u_{0}\), such that there is a unique solution \(u\) satisfying
\begin{center}
    \(u \in C([0, T), H^s) \cap C^{1}([0, T), H^{s-1})\).
\end{center}
Moreover, the solution depends continuously on the initial data, i.e, the map \(u_{0} \rightarrow u(u_{0}, .)\) is continuous from \(H^{s}\) to \(C([0, T), H^s) \cap C^{1}([0,T), H^{s-1})\).\\
\end{theorem}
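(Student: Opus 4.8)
My plan is to recognize the statement as an instance of Kato's Theorem~\ref{kato} with the concrete data $X=H^{s-1}$, $Y=H^{s}$, $S=\Gamma=(1-L\partial^2_x)^{1/(p+2)}$, $A(u)$ as in (\ref{A}) and $f\equiv 0$ as in (\ref{f}), and then to carry out the genuine work, namely checking that the four structural conditions (A1)--(A4) actually hold for these operators; once they are in force the maximal time $T$, the unique solution in $C([0,T),H^{s})\cap C^{1}([0,T),H^{s-1})$ and the continuous dependence on $u_0$ follow verbatim from Theorem~\ref{kato}. Two facts drive every estimate. First, $\Gamma$ is an isometric isomorphism of order one, so its negative powers smooth: $\Gamma^{-(p+2)}=(1-L\partial^2_x)^{-1}$ gains $p+2$ derivatives. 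Second, since multiplication by $u$ has order zero, the commutator $[L\partial^2_x,u]$ has order $p+1$, one less than $L\partial^2_x$ itself; hence the nonlocal part $b\,\Gamma^{-(p+2)}[L\partial^2_x,u]\partial_x$ of $A(u)$ is of order $0$, and $(a+b)u\partial_x$ is the genuine principal (transport) part. Finally, the map $u\mapsto A(u)$ is \emph{linear}, so $A(u)-A(v)=A(u-v)$ and $B(u)-B(v)=B(u-v)$, which reduces the two Lipschitz requirements to boundedness estimates for a single argument.

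The conditions (A4) and (A2) are the routine ones. Since $f\equiv 0$, all inequalities in (A4) hold trivially. For (A2) I would establish $\|A(w)\phi\|_{s-1}\le C\|w\|_{s-1}\|\phi\|_{s}$: the transport term is handled by the algebra property of $H^{s-1}$ (available once $s-1>\tfrac12$), while for the nonlocal term the $p+2$ orders of smoothing of $\Gamma^{-(p+2)}$ more than compensate the order-$(p+1)$ commutator together with the single derivative $\partial_x$, leaving an order-zero operator whose norm is controlled by $\|w\|_{s-1}$. By linearity this is exactly the bound $\|A(u)-A(v)\|_{\mathcal L(Y,X)}\le C_1\|u-v\|_{X}$.

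The substance lies in (A1) and (A3). For (A1) I would show that $A(u)$ is quasi-$m$-accretive on $H^{s-1}$, uniformly for $\|u\|_{Y}\le r$: working in the $H^{s-1}$ inner product and commuting the norm-defining operator past $A(u)$, the skew part of the transport term produces a commutator controlled by $\|u_x\|_{L^\infty}\lesssim\|u\|_{s}$ through a Kato--Ponce estimate, and the order-zero nonlocal remainder is bounded; this yields the dissipative bound and hence $\|T_u(t)\|_{\mathcal L(X)}\le e^{\omega_r t}$. The crux is (A3): writing $B(u)=\Gamma A(u)\Gamma^{-1}-A(u)=[\Gamma,A(u)]\Gamma^{-1}$, since $\Gamma$ and $A(u)$ both have order one their commutator has order one, so $[\Gamma,A(u)]\Gamma^{-1}$ has order zero and should be bounded on $H^{s-1}$. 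Making this rigorous is where the difficulty concentrates, because $L$ sits \emph{inside} the nonlinearity, so the ordinary Leibniz and differentiation rules are unavailable and one must estimate $[\Gamma,(a+b)u\partial_x]$ and $[\Gamma,b\,\Gamma^{-(p+2)}[L\partial^2_x,u]\partial_x]$ by genuine commutator bounds while keeping $L$ in closed form; linearity of $A$ then upgrades the resulting boundedness to the Lipschitz estimate $\|B(u)-B(v)\|_{\mathcal L(X)}\le C_2\|u-v\|_{Y}$.

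I expect the main obstacle to be the order bookkeeping in (A3): one must verify that the smoothing furnished by $\Gamma^{-1}$ and by $\Gamma^{-(p+2)}$ exactly absorbs the derivatives generated when $\Gamma$ is commuted past $u\partial_x$ and past $[L\partial^2_x,u]\partial_x$, and that the coefficients, which carry derivatives of $u$ up to order $p+2$, can still be estimated in $H^{s-1}$. It is precisely this accounting that pins down the regularity requirement: after the loss of the $p+2$ orders carried by $1-L\partial^2_x$, the space must remain a multiplier algebra, i.e. $(s-1)-(p+2)>\tfrac12$, which is exactly $s>\tfrac72+p$. Throughout, all constants are uniform on balls $\mathrm B_r(0)\subseteq Y$ because $r(u)$ is linear in $u$ and every estimate is multilinear in $u$ and the test function, so the hypotheses of Theorem~\ref{kato} hold with $r$-dependent constants as demanded, and the theorem applies.
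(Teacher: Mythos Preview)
Your proposal is correct and follows essentially the same route as the paper: recast the equation in Kato's quasi-linear form with $X=H^{s-1}$, $Y=H^{s}$, $S=\Gamma$, $A(u)=r(u)\partial_x$ and $f\equiv 0$, and verify (A1)--(A3) via the commutator estimate of Proposition~\ref{ctt}, exploiting exactly the order bookkeeping and the linearity of $u\mapsto A(u)$ that you describe (so (A4) is vacuous). The only cosmetic difference is in (A1): the paper first proves quasi-$m$-accretivity of $A(u)$ in $L^{2}$ and then transfers it to $H^{s-1}$ by conjugating with $Q=\Gamma^{s-1}$ and invoking a perturbation\,/\,Pazy admissibility argument, whereas you propose to work directly in the $H^{s-1}$ inner product---both routes are standard and lead to the same conclusion.
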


In order to prove this result, we will apply Kato's semigroup approach. Since \(f(u)=0\) in our Cauchy problem, we only need to verify assumptions (A1)-(A3).\\
\\
Note that if \(L\) is the identity operator, \(a=2\) and \(b=1\), we get the Camassa-Holm equation. Considering this, the steps in the proof can be followed clearly.
 
\subsection*{Proof of Assumption (A1):}
Below, you will find the lemmas to be used in the proof of assumption \((A1)\).\\

\begin{lemma}\label{L2}
The operator \(A(u)=r(u)\partial_{x}\) in \(L^2\), with \(u \in H^{s}, s>\frac{7}{2}+p\) is quasi-m-accretive.
\end{lemma}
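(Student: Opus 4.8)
The plan is to split $A(u)$ into a principal transport part with smooth real coefficient and a genuinely bounded remainder, so that quasi-m-accretivity reduces to the classical fact that $g\,\partial_x$ with $g\in W^{1,\infty}(\mathbb{R})$ is quasi-m-accretive on $L^2$, together with the stability of this property under bounded perturbations. Concretely, recalling \eqref{A} I would write $A(u)=(a+b)\,u\,\partial_x+K(u)$ with $K(u):=b\,\Gamma^{-(p+2)}\,[L\partial_x^2,u]\,\partial_x$, and use that $\Gamma^{-(p+2)}=(1-L\partial_x^2)^{-1}$.

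First I would show that $K(u)$ is a bounded operator on $L^2$ with $\|K(u)\|_{\mathcal L(L^2)}\le C\|u\|_s$. This is order bookkeeping: $L\partial_x^2$ is an operator of order $p+2$ with $x$-independent symbol, so the commutator $[L\partial_x^2,u]$ drops one order to $p+1$, its coefficients being derivatives of $u$ up to order $p+2$; since $\Gamma^{-(p+2)}$ has order $-(p+2)$ and $\partial_x$ has order $+1$, the three orders sum to $0$ and $K(u)$ is of order $0$. The relevant coefficients lie in $L^\infty(\mathbb{R})$ because $s>\tfrac72+p>p+\tfrac52$ forces $\partial_x^{\,p+2}u\in L^\infty(\mathbb{R})$ by Sobolev embedding. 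I expect this commutator/order estimate to be the main technical obstacle, and it is precisely here that the positivity and even order $p$ of $L$ enter, guaranteeing that $(1-L\partial_x^2)^{-1}$ is a well-defined smoothing operator of order $-(p+2)$.

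Next I would establish quasi-accretivity. For $v\in H^1$ (the natural domain of $A(u)$ in $L^2$), integration by parts gives $\mathrm{Re}\,\langle (a+b)u\,\partial_x v,v\rangle_{L^2}=-\tfrac{a+b}{2}\int_{\mathbb{R}}u_x\,v^2\,dx\ge-\tfrac{a+b}{2}\|u_x\|_{L^\infty}\|v\|_{L^2}^2$, with $\|u_x\|_{L^\infty}\le C\|u\|_s$. Combined with $|\langle K(u)v,v\rangle_{L^2}|\le\|K(u)\|_{\mathcal L(L^2)}\|v\|_{L^2}^2$ from the previous step, this yields $\mathrm{Re}\,\langle A(u)v,v\rangle_{L^2}\ge-\beta\|v\|_{L^2}^2$ for a constant $\beta=\beta(\|u\|_s)\ge 0$; hence $A(u)+\beta$ is accretive.

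Finally, for m-accretivity I would verify that the range of $A(u)+\beta+\lambda$ is all of $L^2$ for $\lambda>0$. The cleanest route is the adjoint criterion: a densely defined, closed, accretive operator is m-accretive once its adjoint is accretive. Here $A(u)$ is closed on $L^2$ with domain $H^1$, being the sum of the closed transport operator $(a+b)u\,\partial_x$ and the bounded $K(u)$, and its formal adjoint is $-(a+b)u\,\partial_x-(a+b)u_x+K(u)^*$, i.e.\ again a smooth-coefficient transport operator plus a bounded operator, which is quasi-accretive by the same integration-by-parts estimate. Enlarging $\beta$ if necessary so that both $A(u)+\beta$ and its adjoint are accretive, I conclude that $A(u)+\beta$ is m-accretive, that is, $A(u)$ is quasi-m-accretive. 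Equivalently, one may invoke the bounded perturbation theorem: $-(a+b)u\,\partial_x$ generates a quasi-contraction $C_0$-semigroup on $L^2$, $K(u)$ is a bounded perturbation, and bounded perturbations preserve the generation property, the accretivity constant being restored by the shift.
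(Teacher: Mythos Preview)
Your proof is correct and reaches the same conclusion by a closely related but differently organized route. The paper treats $r(u)$ as a single coefficient throughout: for accretivity it integrates by parts on the whole operator to obtain $(A(u)w,w)_0=-\tfrac12((r(u))_xw,w)_0$ and bounds by $\|(r(u))_x\|_{L^\infty}\|w\|_0^2$; for the range condition it first proves that $A$ is closed via the domain description $\mathcal D(A)=\{w\in L^2: r(u)w\in H^1\}$ and then argues by contradiction using the adjoint $A^*(u)=-\partial_x\,r(u)=-r_x(u)-r(u)\partial_x$, to which the same inner-product estimate applies. You instead split $A(u)=(a+b)u\,\partial_x+K(u)$, show $K(u)\in\mathcal L(L^2)$ by order counting, and then reduce both accretivity and the range condition to the classical variable-speed transport generator perturbed by a bounded operator. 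Both arguments share the same integration-by-parts core and the same adjoint/density mechanism for surjectivity; your decomposition has the advantage of making the pseudodifferential bookkeeping explicit and of not relying on interpreting $r(u)$ (and $(r(u))_x$) as a genuine multiplication operator, while the paper's version is more economical in that a single estimate on $r(u)$ serves for both parts. One technical remark: the commutator estimate the paper records (Proposition~\ref{ctt}) requires $s\ge 0$ and so does not directly yield your $L^2$-boundedness of $K(u)$; your Leibniz/order argument, using that $s>\tfrac72+p$ forces $\partial_x^{p+2}u\in L^\infty$, is precisely the ingredient needed there.
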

\begin{proof}
Since \(L^2\) is a Hilbert space with standard inner product $(.,.)_0$, \(A(u)\) is quasi-m-accretive if and only if there is a real number \(\beta\) such that
\begin{itemize}
    \item[(a)] \((A(u)w, w)_{0} \geq -\beta||w||_{0}^2\),
    \item[(b)]  The range of \(A(u)+\lambda\) is all of \(X\) for some (or all) \(\lambda>\beta\).
\end{itemize}

\noindent
First, we will prove part \((a)\). By using integration by parts, the left-hand side of the equality can be written as follows:
\begin{center}
    \((A(u)w, w)_{0} = (r(u)\partial_{x}w, w)_{0} = \frac{-1}{2}((r(u))_{x}w, w)_{0} \)
\end{center}
since if we let
\begin{align*}
K=(r(u)w_{x},\ w)_{0}\ &= -(r(u)w_{x},\ w)_{0}\ -\ ((r(u))_{x}w,\ w)_{0},&
\end{align*}
where \(-(r(u)w_{x},\ w)_{0} = -K\). Then,
\begin{align*}
2K = - ((r(u))_{x}w,\ w)_{0} \\
\\
K = -\frac{1}{2} ((r(u))_{x}w,\ w)_{0}.
\end{align*}
%
%
Then, it follows that
\begin{align*}
|(r(u)\partial_{x}w, w)_{0}| &= |\frac{-1}{2}((r(u))_{x}w, w)_{0}|&\\
\\
&\leq c||(r(u))_{x}w||_{0}||w||_{0}&\\
\\
&\leq c||(r(u))_{x}||_{L^{\infty}}||w||_{0}^{2}&\\
\\
&\leq c||(r(u)||_{s}||w||_{0}^{2}&\\
\\
&\leq \beta||w||_{0}^{2},
\end{align*}
where $\beta$ is a constant depending on $||u||_s$. Since \(u \in H^{s}\) with \(s>\frac{7}{2}+p\), it follows that \(||u_{x}||_{L^{\infty}} \leq ||u||_{s}\). We show that the operator \(A\) is dissipative for all \(\lambda>\beta\). That means the operator \(-A\) is accretive, where we have \((A(u)w, w)_{0} \geq -\beta||w||_{0}^2\) as required.\\
\\
Now, we will prove part \((b)\). Note that if \(A\) is a closed operator, then \(A(u)+\lambda\) has closed range in \(X\) for all \(\lambda>\beta\). So, it is enough to show that \(A(u)+\lambda\) has dense range in \(x\) for all  \(\lambda>\beta\).\\
\\First, we will show that \(A\) is a closed operator in \(L^2\). Let \((v_{n})\) be a \mbox{sequence} in \(\mathcal{D}(A)\) which converges to \(v\in L^2\) and \(Av_{n}\) converges to \(w\in L^2\). Then, since \(v_{n} \in \mathcal{D}(A)\) and \(\mathcal{D}(A)=\{w\in L^2 \ | \ r(u)w \in H^1\}\subset L^2\), we can conclude that \(rv_{n} \in H^1\). Also, by the continuity of the multiplication \(H^r\times L^2 \rightarrow L^2 \) for \(r > \frac{1}{2}\), both \(rv_{n} \rightarrow rv\) and \(r_{x}v_{n} \rightarrow r_{x}v\) in \(L^2\), which implies that \((rv_{n})_{x} \rightarrow w + r_{x}v \) in \(L^2\). Then, we have the sequences \((rv_{n})\) and \((rv_{n})_{x}\) converges in \(L^2\). Then we can conclude that \((rv_{n})\) converges to \(rv\) in \(H^1\), which implies that \(v \in \mathcal{D}(A) \). Moreover, by continuity of \(\partial_{x}: H^1 \rightarrow L^2\) implies that \(\lim_{n \to \infty}(rv_{n})_{x} = (rv)_{x}\), thus we get \(w = (rv)_{x} - r_{x}v = Av \). Hence, by definition, we showed that A is a closed operator.\\
\\
Now, we need to show that \((A(u)+\lambda)\) has dense range in \(L^2\) for all \(\lambda>\beta\). Note that the adjoint operator of the \(A(u)=r(u)\partial_{x}\) can be written as
\begin{center}
    \(A^{*}(u) = -r_{x}(u)-r(u)\partial_{x}\).
\end{center}
Then,
\begin{center}
    \(A^{*}(u)w = -r_{x}(u)w-r(u)w_{x} = -(r(u)w)_{x}\),
\end{center}
where \(r_{x}(u)w \in L^2 \) since \(u_{x} \in L^{\infty}\) and \(w \in L^2\), and \(r(u)w_{x}=A(u) \in L^2\) for \mbox{\(w \in \mathcal{D}(A)\)}. Hence, we can obtain that
\begin{center}
    \(\mathcal{D}(A^{*})=\{w\in L^2 \ | \ A^{*}(u)w \in L^2\}\).
\end{center}
On the contrary, assume that the range of \((A(u)+\lambda)\) is not all of \(L^{2}\). Then, there exists \(z\neq 0 \in L^{2}\) such that
\begin{center}
    \((A(u)w, z)_{0}=0\), \ \(\forall w \in \mathcal{D}(A(u))\).
\end{center}
Since \(H^1\subset \mathcal{D}(A)\), \(\mathcal{D}(A)\) is dense in \(L^2\). Then, due to \(\mathcal{D}(A^{*})\) is closed, \(z\in \mathcal{D}(A^{*})\). Then, by using the fact that \(\mathcal{D}(A)=\mathcal{D}(A^{*})\), we can write
\begin{center}
    \(((A(u)+\lambda)w, z)_{0}=(w, (A(u)+\lambda)^*z)_{0}=0\),
\end{center}
which implies that \((A(u)^{*}+ \lambda )z = 0\) in \(L^{2}\). After multiplying this equality by \(z\), we can rewrite it as
\begin{align*}
    0=((A^{*}(u)+\lambda)z, z)_{0} &= (A^{*}(u)z, z)_{0}+(\lambda z, z)_{0} &\\
    \\
    &=(z, A(u)z)_{0} + (\lambda z, z)_{0}&\\
    \\
    &\geq(\lambda-\beta)||z||_{0}^2, \ \ \ \forall \lambda > \beta.&
\end{align*}
Since for all \(\lambda > \beta\), the term \((\lambda-\beta)>0\). Therefore, \(z=0\). However, it contradicts with the assumption \(z\neq0\), which completes the proof of Lemma \ref{L2}.\\
\end{proof}

\noindent
Now, we give the commutator estimate needed for the upcoming lemma:
\begin{proposition} [\cite{lannes})]\label{ctt}
    Let \(n>0\), \(s \geq 0 \), and \(3/2 < s+n \leq \sigma\). Then, for all \(f \in H^{\sigma}\) and \(g \in H^{s+n-1}\), one has
\begin{center}
    \( ||[\Lambda^{n},\ f]g||_{s} \leq c||f||_{\sigma}||g||_{s+n-1}\),
\end{center}
where \(c\) is a constant which is independent of \(f\) and \(g\).
\end{proposition}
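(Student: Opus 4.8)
The plan is to prove this as a Kato--Ponce type commutator estimate by passing to the Fourier side and exploiting a Bony paraproduct (Littlewood--Paley) decomposition. First I would write, with $\Lambda^{n}$ the Bessel potential of symbol $\langle \xi \rangle^{n} := (1+\xi^{2})^{n/2}$,
\[
\widehat{[\Lambda^{n}, f]g}(\xi) = c \int_{\mathbb{R}} \bigl(\langle \xi \rangle^{n} - \langle \eta \rangle^{n}\bigr)\, \hat f(\xi - \eta)\, \hat g(\eta)\, d\eta ,
\]
so that the whole estimate is reduced to controlling the multiplier $m(\xi,\eta) = \langle \xi \rangle^{n} - \langle \eta \rangle^{n}$. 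The algebraic gain hidden in the commutator is captured by the symbol bound
\[
\bigl|\langle \xi \rangle^{n} - \langle \eta \rangle^{n}\bigr| \le C\, |\xi - \eta|\, \bigl(\langle \xi \rangle^{n-1} + \langle \eta \rangle^{n-1}\bigr),
\]
which follows from the mean value theorem applied to $t \mapsto \langle \eta + t(\xi-\eta)\rangle^{n}$ together with $|\partial_{\xi} \langle \xi \rangle^{n}| \lesssim \langle \xi \rangle^{n-1}$. Morally, this says that the commutator trades one derivative falling on $g$ for one derivative falling on $f$, which is exactly the gain that makes the estimate work.

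Next I would introduce Littlewood--Paley projections $\Delta_{j}$ and split $[\Lambda^{n}, f]g = \sum_{j,k} [\Lambda^{n}, \Delta_{j} f]\Delta_{k} g$ into the three Bony regions according to the relative sizes of the frequencies $2^{j}$ (carried by $f$) and $2^{k}$ (carried by $g$): the low--high paraproduct $j \lesssim k-2$, the high--low paraproduct $j \gtrsim k+2$, and the resonant part $|j-k| \le 1$. In the low--high region the output has frequency $\sim 2^{k}$, and the symbol estimate gains a factor $|\xi-\eta| \sim 2^{j} \le 2^{k}$ on the $f$-factor while $\langle \xi \rangle^{n-1}+\langle \eta \rangle^{n-1} \sim 2^{k(n-1)}$; placing $\Delta_{j}f$ in $L^{\infty}$ by Bernstein's inequality and $\Delta_{k}g$ in $L^{2}$ yields a summable bound. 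The high--low and resonant pieces are handled symmetrically, always putting the lower-frequency factor in $L^{\infty}$ and the higher one in $L^{2}$. Here the hypothesis $s+n > 3/2$ enters decisively: it gives $s+n-1 > 1/2$, hence the embedding $H^{s+n-1}(\mathbb{R}) \hookrightarrow L^{\infty}$ needed to control the $g$-type factors, while $s+n \le \sigma$ ensures that $f \in H^{\sigma}$ supplies enough regularity to absorb the extra derivative.

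Finally I would reassemble the $H^{s}$ norm through the square-function characterization $\|h\|_{s}^{2} \sim \sum_{j} \langle 2^{j}\rangle^{2s}\|\Delta_{j} h\|_{L^{2}}^{2}$, apply Cauchy--Schwarz, use that the frequency supports overlap only boundedly often (almost orthogonality), and collect the result as $c\,\|f\|_{\sigma}\,\|g\|_{s+n-1}$. I expect the main obstacle to be the resonant high--high interactions: there the commutator gain no longer localizes the output frequency, so one must sum over all output frequencies $\ell \le k$ and rely on the strict decay built into $H^{s+n-1}$ with $s+n-1 > 1/2$ to keep that summation convergent. This is precisely the step that forces the lower bound $3/2 < s+n$ and is the delicate point of the bookkeeping; the remaining regions are routine once the symbol estimate and the Sobolev embedding are in hand. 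As an alternative, the same bound follows from the Coifman--Meyer multiplier theorem applied to the renormalized symbol $m(\xi,\eta)/\bigl(|\xi-\eta|(\langle\xi\rangle^{n-1}+\langle\eta\rangle^{n-1})\bigr)$, but the paraproduct route makes the role of the hypotheses most transparent.
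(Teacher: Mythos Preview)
Your paraproduct/Kato--Ponce sketch is a sound and standard route to this commutator estimate, and the identification of the high--high resonant region as the place where the hypothesis $s+n>3/2$ is genuinely needed is correct. However, there is nothing to compare against: in the paper this proposition is not proved at all, but is simply quoted as a known result from Lannes's monograph \cite{lannes} and then invoked as a black box in the verification of assumptions (A1)--(A3). So your proposal supplies a proof where the paper supplies only a citation. If anything, your write-up is more detailed than what the paper requires; for the purposes of this paper it would be entirely acceptable (and consistent with the authors' presentation) to state the estimate, cite Lannes, and move on.
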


\begin{lemma}\label{s-1}

The operator \(A(u)=r(u)\partial_{x}\) in \(H^{s-1}\), with \(u \in H^{s}, s>\frac{7}{2}+p\) is quasi-m-accretive.

\end{lemma}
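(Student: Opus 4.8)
The plan is to prove that $A(u) = r(u)\partial_x$ is quasi-m-accretive on $H^{s-1}$ by transporting the $L^2$ result of Lemma \ref{L2} up to $H^{s-1}$ via the isometry $\Gamma = (1-L\partial_x^2)^{1/p+2}$, exactly as one does for the classical Camassa--Holm equation. The standard device is to recall that an operator is quasi-m-accretive on $H^{s-1}$ if and only if the conjugated operator $\Lambda^{s-1} A(u) \Lambda^{-(s-1)}$ is quasi-m-accretive on $L^2$, where $\Lambda = (1-\partial_x^2)^{1/2}$. Writing $\Lambda^{s-1} A(u)\Lambda^{-(s-1)} = A(u) + [\Lambda^{s-1}, A(u)]\Lambda^{-(s-1)}$, it suffices to show that the correction term $[\Lambda^{s-1}, A(u)]\Lambda^{-(s-1)}$ extends to a \emph{bounded} operator on $L^2$: adding a bounded operator preserves quasi-m-accretiveness (it only shifts the constant $\beta$), so the $L^2$ accretiveness from Lemma \ref{L2} then yields accretiveness in the $H^{s-1}$ inner product, and the range condition transfers identically.

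First I would reduce the whole question to the boundedness of the commutator. Since $A(u) = r(u)\partial_x = (a+b)u\partial_x + b\Gamma^{-(p+2)}[L\partial_x^2,u]\partial_x$, I would split $[\Lambda^{s-1}, A(u)]$ into the two contributions coming from these summands. The guiding observation is that the symbol structure of $A(u)$ is that of a first-order operator with $H^s$-coefficient $r(u)$, so its commutator against the order-$(s-1)$ operator $\Lambda^{s-1}$ should be of order $s-1$, and composition with $\Lambda^{-(s-1)}$ brings it back to order $0$, i.e.~bounded on $L^2$. Concretely, I would estimate
\begin{center}
$\|[\Lambda^{s-1}, r(u)\partial_x]\Lambda^{-(s-1)} w\|_0 \leq \|[\Lambda^{s-1}, r(u)]\partial_x \Lambda^{-(s-1)}w\|_0$,
\end{center}
and apply the commutator estimate of Proposition \ref{ctt} with $n = s-1$, $f = r(u)$, and $g = \partial_x\Lambda^{-(s-1)}w$. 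This requires controlling $\|r(u)\|_\sigma$ in a sufficiently high Sobolev norm and observing that $\partial_x\Lambda^{-(s-1)}$ is bounded from $L^2$ into $H^{s-2}$, which is the range where Proposition \ref{ctt} applies with $g \in H^{s+n-1}$ matching $n=s-1$.

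The main obstacle, and the place where the hypothesis $s > \tfrac{7}{2}+p$ really enters, is controlling the coefficient $r(u)$ in a high enough norm, because $r(u)$ contains the nonlocal term $b\,\Gamma^{-(p+2)}[L\partial_x^2, u]$, where $L$ has order $p$ and is buried inside the commutator. I would first establish that $r\colon H^s \to H^{s-1}$ maps boundedly and that $r(u)$ itself is at least $H^{\sigma}$ with $\sigma$ large enough for Proposition \ref{ctt}; the loss of one derivative from the extra $\partial_x^2$ against the smoothing of order $p+2$ from $\Gamma^{-(p+2)}$ must be tracked carefully, and this net balance is exactly what forces the threshold to sit at $\tfrac{7}{2}+p$ rather than the usual $\tfrac{5}{2}$ for Camassa--Holm. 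Once the commutator is shown bounded on $L^2$ uniformly for $u$ in a ball of $H^s$, accretiveness follows by the same integration-by-parts computation as in Lemma \ref{L2} applied in the $\Lambda^{s-1}$-transformed inner product, and the range/surjectivity condition (b) transfers verbatim since $\Lambda^{s-1}$ is an isomorphism commuting with the spectral shift by $\lambda$. I would end by noting that in the special case $L = \mathrm{Id}$, $a=2$, $b=1$ the entire argument collapses to the familiar Camassa--Holm estimate, which serves as a consistency check.
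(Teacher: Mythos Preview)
Your proposal is correct and follows essentially the same approach as the paper: conjugate $A(u)$ by the isometry from $H^{s-1}$ to $L^2$, show the resulting commutator correction $[\,\cdot\,,r(u)]\partial_x(\,\cdot\,)^{-1}$ is bounded on $L^2$ via Proposition~\ref{ctt}, and then invoke the $L^2$ result of Lemma~\ref{L2} together with bounded perturbation and the Pazy admissibility lemma (Lemma~\ref{semi}) to transfer quasi-m-accretiveness. The only cosmetic differences are that the paper uses the $L$-adapted isometry $\Gamma^{s-1}$ rather than the standard $\Lambda^{s-1}$, and that it also verifies the accretivity inequality $(A(u)w,w)_{s-1}\ge -\beta\|w\|_{s-1}^2$ directly---splitting $(\Gamma^{s-1}r(u)\partial_x w,\Gamma^{s-1}w)_0$ into a commutator piece and an integration-by-parts piece---before appealing to the conjugation argument for the range condition.
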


\begin{proof}

Since $H^{s-1}$ is a Hilbert space, $A(u)=r(u)\partial_{x}$ is quasi-m-accretive if and only if there is a real number $\beta$ such that
\begin{itemize}
    \item [$(a)$] $(A(u)w, w)_{s-1} \geq -\beta||w||_{s-1}^2,$
    \item [$(b)$] -$A(u)$ is the infinitesimal generator of a $C_{0}$-semigroup on $H^{s-1}$, for some (or all) $\lambda>\beta$.
\end{itemize}

First, we will prove part $(a)$. Since \(u \in H^{s}\), with \(s>\frac{7}{2}\), we can say that \(u\) and \(u_{x}\) belong to $L^{\infty}$. Then, it follows that \(||u_{x}||_{L^{\infty}} \leq ||u||_{s}\). Note that
\begin{align*}
\Gamma^{s-1}(r(u)\partial_{x}w) &= [\Gamma^{s-1}, r(u)]\partial_{x}w+r(u)\Gamma^{s-1}(\partial_{x}w)&\\
\\
&=\ [\Gamma^{s-1}, r(u)]\partial_{x}w+r(u)\partial_{x}\Gamma^{s-1}w.&\\
\end{align*}

Then, we have
\begin{align*}
(A(u)w, w)_{s-1} &=\ (r(u)\partial_{x}w,\ w)_{s-1} &\\
\\
&=\ (\Gamma^{s-1}r(u)\partial_{x}w,\ \Gamma^{s-1}w)_{0}&\\
\\
&=\ ([\Gamma^{s-1}, r(u)]\partial_{x}w,\ \Gamma^{s-1}w)_{0} + (r(u)\partial_{x}\Gamma^{s-1}w,\ \Lambda^{s-1}w)_{0}.&\\
\end{align*}

For the first term \(([\Gamma^{s-1}, r(u)]\partial_{x}w,\ \Gamma^{s-1}w)_{0}\), use the commutator estimate \mbox{(Proposition \ref{ctt})} with \(n=s-1\), and \(\sigma=s\). Then, we get
\begin{align*}
|([\Gamma^{s-1}, r(u)]\partial_{x}w,\ \Gamma^{s-1}w)_{0}| &\leq\ c||(r(u)||_{s}||\partial_{x}w||_{s-2}||w||_{s-1}&\\
\\
&\leq\ \Tilde{c}||w||^{2}_{s-1},
\end{align*}
where \(\Tilde{c}\) is a constant depending on \(||u||_{s}\).\\
\\
For the second term \((r(u)\partial_{x}\Gamma^{s-1}w,\ \Gamma^{s-1}w)_{0}\), use the integration by parts to get
\begin{align*}
|(r(u)\partial_{x}\Gamma^{s-1}w,\ \Gamma^{s-1}w)_{0}| &=\ \big|-\frac{1}{2}(r(u)_{x},\ \Gamma^{s-1}w)^2)_{0}\big|&\\
\\
&\leq\ c||r(u)_{x}||_{L^{\infty}}||\Gamma^{s-1}w||^{2}_{0}&\\
\\
&\leq\ c||r(u)_{x}||_{L^{\infty}}||w||^{2}_{s-1}&\\
\\
&\leq\ \Tilde{c}||w||^{2}_{s-1},
\end{align*}
where \(\Tilde{c}\) is a constant depending on \(||u||_{s}\).\\
\\
Set \(\beta=\Tilde{c}||u||_{s}\). Then, we get \((A(u)w, w)_{s-1} \geq -\beta||w||_{s-1}^2\), as required.\\
\\
Now, we will prove part \((b)\). Let \(Q=\Gamma^{s-1}\), note that \(Q\) is an isomorphism of \(H^{s-1}\) to \(L^{2}\), and \(H^{s-1}\) is continuously and densely embedded into \(L^2\) as \(s>\frac{3}{2}\). Define
\begin{align*}
    A_{1}(u):= QA(u)Q^{-1} &=\ \Gamma^{s-1} A(u) \Gamma^{1-s}&\\
    \\
    &=\ \Gamma^{s-1} r(u)\partial_{x} \Gamma^{1-s}&\\
    \\
    &=\ \Gamma^{s-1} r(u) \Gamma^{1-s}\partial_{x}&\\
\end{align*}
and \(B_{1}=\ A_{1}(u) +\ A(u)\).\\
Let \(w \in L^{2}\) and \(u \in H^{s}\), where \(s>\frac{7}{2}+p\). Then, we have
\begin{align*}
    ||B_{1}(u)||_{0} &=\ ||[\Gamma^{s-1},\ A(u)]\Gamma^{1-s}w||_{0}&\\
    \\
    &=\ ||[\Gamma^{s-1},\ r(u)]\Gamma^{1-s}\partial_{x}w||_{0}&\\
    \\
    &\leq\ c||r(u)||_{s}||\Gamma^{1-s}\partial_{x}w||_{s-2}&\\
    \\
    &\leq\ c||r(u)||_{s}||w||_{0}&,
\end{align*}
where we again use the commutator estimate (Proposition \ref{ctt}) with \(n=s-1\), and \(\sigma=s\). Therefore, we obtain \(B_{1}(u) \in \mathcal{L}(L^2)\).
\end{proof}

\begin{lemma} \cite{pazy2012semigroups}\label{semi}
Let \(X\) and \(Y\) be two Banach spaces such that \(Y\) is continuously and densely embedded in \(X\). Let -\(A\) be the infinitesimal generator of the \(C_{0}\)-semigroup \(T(t)\) on \(X\) and let \(Q\) be an isomorphism from \(Y\) onto \(X\). Then \(Y\) is -\(A\)-admissible (i.e. \(T (t)Y \subset Y\) for all \(t \geq 0\), and the restriction of \(T(t)\) to \(Y\) is a \(C_{0}\)-semigroup on \(Y\) ) if and only if -\(A_{1} =\) - \(QAQ^{-1}\) is the infinitesimal generator of the \(C_{0}\)-semigroup \(T_{1}(t) = QT(t)Q^{-1}\) on \(X\). Moreover, if \(Y\) is -\(A\)-admissible, then the part of -\(A\) in \(Y\) is the infinitesimal generator of the restriction \(T(t)\) to \(Y\).
\end{lemma}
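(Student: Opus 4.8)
The plan is to reduce the statement to two standard ingredients: the similarity principle for $C_0$-semigroups and the identification of the generator of a restricted semigroup with the part of the generator in the invariant subspace. Throughout I write $-A_Y$ for the part of $-A$ in $Y$, that is, the operator acting as $-A$ on $D(A_Y)=\{y\in Y\cap D(A): Ay\in Y\}$. A direct domain computation gives $D(A_1)=Q\,D(A_Y)$ and $A_1=QA_YQ^{-1}$, so $A_1$ is exactly the $Q$-conjugate of the part of $A$ in $Y$; keeping this identity in sight is what makes the two ingredients fit together.

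The first ingredient I would record is the similarity principle: if $-C$ generates a $C_0$-semigroup $U(t)$ on a Banach space $Z$ and $P\colon Z\to W$ is a topological isomorphism, then $-PCP^{-1}$ generates the $C_0$-semigroup $PU(t)P^{-1}$ on $W$, with domain $P\,D(C)$. This is routine: the identity at $t=0$, the semigroup law, and the exponential bound transfer verbatim under conjugation, strong continuity on $W$ follows from strong continuity on $Z$ together with the boundedness of $P$ and $P^{-1}$, and the generator is read off directly from the difference quotient.

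For the implication ``$-A_1$ generates $T_1(t)$ $\Rightarrow$ $Y$ is admissible'', I would take the $C_0$-semigroup $V(t)$ generated by $-A_1$ on $X$ and apply the similarity principle with $P=Q^{-1}$: since $Q^{-1}A_1Q=A_Y$, the part $-A_Y$ generates the $C_0$-semigroup $W(t):=Q^{-1}V(t)Q$ on $Y$. The crux is then to show $W(t)=T(t)|_Y$. For $y\in D(A_Y)\subset D(A)$ the curve $t\mapsto W(t)y$, read in $X$ through the continuous embedding, is differentiable with $\tfrac{d}{dt}W(t)y=-A_YW(t)y=-AW(t)y$, so it solves the abstract Cauchy problem $\dot v=-Av$, $v(0)=y$ in $X$; by uniqueness of classical solutions for the generator $-A$ it coincides with $T(t)y$. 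Approximating an arbitrary $y\in Y$ by elements of the dense domain $D(A_Y)$ and passing to the $X$-limit then yields $T(t)y=W(t)y\in Y$ for all $y\in Y$. This simultaneously shows $T(t)Y\subset Y$, that $T(t)|_Y=W(t)$ is a $C_0$-semigroup on $Y$ (so $Y$ is $-A$-admissible and the part of $-A$ in $Y$ generates it, which is the ``moreover''), and that $V(t)=QW(t)Q^{-1}=QT(t)Q^{-1}=T_1(t)$.

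The converse is the mirror image and isolates the one genuinely delicate point. Assuming $Y$ is $-A$-admissible, let $-G$ generate the restriction $S(t)=T(t)|_Y$ on $Y$ and apply the similarity principle with $P=Q$ to get that $-QGQ^{-1}$ generates $QS(t)Q^{-1}=QT(t)Q^{-1}=T_1(t)$ on $X$. What remains — the main obstacle — is the identification $G=A_Y$. The inclusion $G\subseteq A_Y$ is immediate, since the $Y$-difference quotient of $S(t)y$ at $y\in D(G)$ passes to the same limit in $X$ through the continuous embedding, forcing $y\in D(A)$ and $Ay=Gy\in Y$. For the reverse inclusion I would argue through the resolvent, showing that for large $\lambda$ the operator $(\lambda+G)^{-1}$ is the restriction to $Y$ of $(\lambda+A)^{-1}$, and hence that every $y\in D(A_Y)$ lies in $D(G)$ with $Gy=A_Yy$. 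Once $G=A_Y$ is established, $-QGQ^{-1}=-QA_YQ^{-1}=-A_1$, which closes the equivalence together with the ``moreover'' clause.
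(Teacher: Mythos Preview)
The paper does not supply a proof of this lemma at all: it is quoted, with citation, from Pazy's monograph \cite{pazy2012semigroups} and then invoked as a black box in the paragraph that follows. There is therefore no ``paper's own proof'' to compare your attempt against.

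That said, your reconstruction is essentially the standard argument one finds in Pazy. The key observations you isolate --- the domain identity $D(A_1)=Q\,D(A_Y)$ with $A_1=QA_YQ^{-1}$, the similarity principle for conjugated semigroups, and the identification $G=A_Y$ of the generator of the restricted semigroup with the part of $-A$ in $Y$ --- are exactly the ingredients of the textbook proof, and you handle the one nontrivial point (the reverse inclusion $A_Y\subseteq G$ via the resolvent identity $(\lambda+G)^{-1}=(\lambda+A)^{-1}|_Y$) correctly. One small remark: in the forward direction you tacitly use that $D(A_Y)$ is dense in $Y$ when you approximate a general $y\in Y$; this is justified because you have already shown that $-A_Y$ generates the $C_0$-semigroup $W(t)$ on $Y$, so its domain is automatically dense. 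With that understood, the argument is complete.
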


We show that \(A(u)\) is quasi-m-accretive in \(L^{2}\), i.e, -\(A(u)\) is the infinitesimal generator of \(C_{0}\)-semigroup on \(H^{s-1}\) by Lemma \ref{s-1}. Thus, by using the perturbation theorem for semigroup \cite{pazy2012semigroups}, we can say that \(A_{1}(u) = A(u) + B_{1}(u)\) is also the infinitesimal generator of \(C_{0}\)-semigroup on \(L^{2}\). Then, we can conclude that \(H^{s-1}\) is -\(A\)-admissible. Hence, -\(A(u)\) is the infinitesimal generator of \(C_{0}\)-semigroup on \(H^{s-1}\) by Lemma \ref{semi} with \(X=L^2\), \(Y=H^{s-1}\), and \(Q=\Gamma^{s-1}\). This completes the proof of Lemma \ref{s-1}, and thus assumption \((A1)\).

\subsection*{Proof of Assumption (A2):}
Below, you will find the needed lemmas to be used in the proof of assumption \((A2)\).

\begin{lemma}\label{A2}
Let the operator \(A(u)=r(u)\partial_{x}\), with \(u \in H^{s}\), \(s > \frac{7}{2}+p\). Then, \(A(u) \in \mathcal{L}(H^{s}, H^{s-1})\), for any \(u \in H^{s}\). Moreover,
\begin{center}
    \(||(A(u)-A(v))w||_{s-1} \leq \lambda_{1} ||u-v||_{s-1}||w||_{s}\), \quad \(u,v,w \in H^{s}\).
\end{center}
\end{lemma}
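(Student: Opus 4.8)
The plan is to exploit that $A$ depends \emph{linearly} on its argument: since both $g\mapsto (a+b)g\partial_x$ and $g\mapsto b\Gamma^{-(p+2)}[L\partial^2_x,g]\partial_x$ are linear in $g$, one has $A(u)-A(v)=A(u-v)$ as operators on $H^s$. Consequently both assertions of the lemma follow from the single bilinear estimate
\begin{equation*}
\|A(g)w\|_{s-1}\leq \lambda_{1}\,\|g\|_{s-1}\,\|w\|_{s},\qquad g,w\in H^s,
\end{equation*}
where $\lambda_1$ is a constant independent of $g$ and $w$: the membership $A(u)\in\mathcal L(H^s,H^{s-1})$ is the case $g=u$ (using $\|u\|_{s-1}\leq\|u\|_s$), and the Lipschitz bound is the case $g=u-v$. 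First I would record the two elementary mapping properties that make the indices balance, namely that $H^{s-1}$ is a Banach algebra (because $s-1>\tfrac12$) and that $\Gamma^{-(p+2)}=(1-L\partial^2_x)^{-1}$ is smoothing of order $p+2$, i.e.\ it maps $H^{\sigma}$ boundedly into $H^{\sigma+p+2}$ for every $\sigma$.

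Next I would split $A(g)w$ into its transport part $(a+b)\,g\,\partial_x w$ and its nonlocal part $b\,\Gamma^{-(p+2)}[L\partial^2_x,g]\partial_x w$, and estimate the two separately in $H^{s-1}$. For the transport part the algebra property gives at once $\|g\,\partial_x w\|_{s-1}\leq c\,\|g\|_{s-1}\,\|\partial_x w\|_{s-1}\leq c\,\|g\|_{s-1}\,\|w\|_{s}$, which already carries the correct low-regularity norm $\|g\|_{s-1}$ on $g$. For the nonlocal part I would first use the order $-(p+2)$ smoothing of $\Gamma^{-(p+2)}$ to reduce to $\|[L\partial^2_x,g]\partial_x w\|_{s-p-3}$, and then apply the commutator estimate of Proposition~\ref{ctt} with $n=p+2$, output index $s-p-3$ and $\sigma=s-1$. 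The hypotheses hold precisely because $s>\tfrac72+p$: this forces $s-p-3>\tfrac12\geq0$ and $\tfrac32<(s-p-3)+(p+2)=s-1\leq\sigma$. The estimate then yields $\|[L\partial^2_x,g]\partial_x w\|_{s-p-3}\leq c\,\|g\|_{s-1}\,\|\partial_x w\|_{s-2}\leq c\,\|g\|_{s-1}\,\|w\|_{s}$, and adding the two contributions gives the claimed bound.

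The hard part will be the nonlocal term, for two reasons. The delicate bookkeeping is that the commutator $[L\partial^2_x,g]$ formally loses $p+2$ derivatives, but its top-order symbol cancels, so only $p+1$ net derivatives survive; this one saved derivative is exactly what lets the $p+2$ smoothing of $\Gamma^{-(p+2)}$ produce a net gain and, crucially, lets the commutator estimate place $g$ at regularity $s-1$ rather than $s$, as the statement demands. The second, more technical, issue is that Proposition~\ref{ctt} is stated for $\Lambda^{n}=(1-\partial^2_x)^{n/2}$, whereas here the operator is $L\partial^2_x$; since $L$ is a positive operator of even order $p$, the operator $L\partial^2_x$ is an elliptic Fourier multiplier of order $p+2$ comparable to $\Lambda^{p+2}$, and I would either invoke the analogue of Proposition~\ref{ctt} for $L\partial^2_x$ or reduce to the $\Lambda^{p+2}$ case through this comparison. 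Once the commutator estimate is available in this form, the fact that its constant is independent of the functions makes $\lambda_1$ independent of $u,v$, and the lemma follows.
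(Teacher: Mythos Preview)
Your proof is correct and follows essentially the same route as the paper: the same splitting into the transport part and the nonlocal part, the $H^{s-1}$ algebra property for the first, and Proposition~\ref{ctt} with the identical choice $n=p+2$, $\sigma=s-1$, output index $s-p-3$ for the second (followed by taking $v=0$, equivalently $g=u$, to get $A(u)\in\mathcal L(H^s,H^{s-1})$). The paper applies the $\Lambda^n$ commutator estimate directly to $L\partial_x^2$ without the justification you flag, so your version is, if anything, more careful on that point.
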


\begin{proof}
Let \(u, v, w \in H^{s}\) with \(s>\frac{7}{2}+p\), and note that \(H^{s-1}\) is a Banach algebra.\\
Then, we have
\begin{align*}
||(A(u)-A(v))w||_{s-1} &= ||((a+b)(u-v)\partial_{x}+b\Gamma^{-(p+2)}[L\partial^2_{x}, (u-v)]\partial_{x})w||_{s-1}&\\
\\
&\leq ||(a+b)(u-v)\partial_{x}w||_{s-1}+||b\Gamma^{-(p+2)}[L\partial^2_{x}, (u-v)]\partial_{x}w||_{s-1}&\\
\\
&\leq ||(u-v)||_{s-1}||\partial_{x}w||_{s-1}+||[L\partial^2_{x}, (u-v)]\partial_{x}w||_{s-p-3}&.
\end{align*}

We will use the commutator estimate (Proposition \ref{ctt}) with \(n=p+2\), \(s=s-p-3\), and \(\sigma=s-1\), which implies \(s+n-1=s-2\). Then, for \(f=u-v\) and \(g=\partial_{x}w\), we get
\begin{align*}
||[L\partial^2_{x}, (u-v)]\partial_{x}w||_{s-p-3} &\leq c||(u-v)||_{s-1}||\partial_{x}w||_{s-2}&\\
\\
&\leq c||u-v||_{s-1}||w||_{s-1}&\\
\\
&\leq \lambda_{1}||u-v||_{s-1}||w||_{s},&
\end{align*}
where \(\lambda_{1}\) is a constant. Then, we get
\begin{center}
    \(||(A(u)-A(v))w||_{s-1} \leq \lambda_{1}||u-v||_{s-1}||w||_{s} \).
\end{center}
Take \(v=0\) in the above inequality to obtain \(A(u) \in \mathcal{L}(H^{s},\ H^{s-1})\). This completes the proof of Lemma \ref{A2}, and thus assumption \((A2)\).

\end{proof}

\subsection*{Proof of Assumption (A3):}
Below, you will find the needed lemmas to be used in the proof of assumption \((A3)\).

\begin{lemma}\label{A3}
For any \(u\in H^{s}\), there exists a bounded linear operator \(B(u) \in \mathcal{L}(H^{s-1})\) satisfying \(B(u) = \Gamma A(u)\Gamma^{-1}-A(u)\), where \(B:H^{s} \rightarrow \mathcal{L}(H^{s-1})\) is uniformly bounded sets in \(H^{s-1}\). Moreover,
\begin{center}
    \( ||(B(u)-B(v))w||_{s-1} \leq \lambda_{2}||u-v||_{s}||w||_{s-1} \), \quad \(u,v \in H^{s}\), \(w\in H^{s-1}\).
\end{center}
\end{lemma}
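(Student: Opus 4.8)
The plan is to prove Lemma~\ref{A3} in two stages: first establish that $B(u)$ is a well-defined bounded operator on $H^{s-1}$ with the stated form, and then derive the Lipschitz-type estimate controlling the difference $B(u)-B(v)$. Recall from the definition in (\ref{A}) that $A(u)=r(u)\partial_x$ with $r(u)=(a+b)u+b\Gamma^{-(p+2)}[L\partial_x^2,u]$, and that $\Gamma=(1-L\partial_x^2)^{1/p+2}$ is an isometry from $H^{s}$ onto $H^{s-1}$. The key observation is that $B(u)=\Gamma A(u)\Gamma^{-1}-A(u)$ can be rewritten as a commutator: since $A(u)=r(u)\partial_x$ and $\partial_x$ commutes with $\Gamma$, we have $\Gamma A(u)\Gamma^{-1}-A(u)=[\Gamma,r(u)]\Gamma^{-1}\partial_x$, so that $B(u)w=[\Gamma,r(u)]\Gamma^{-1}\partial_x w$. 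This is exactly the structure to which the commutator estimate (Proposition~\ref{ctt}) applies.

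First I would verify boundedness. Applying Proposition~\ref{ctt} to $[\Gamma,r(u)]g$ with $n=1$ (since $\Gamma=\Lambda^{1}$ in the relevant scale, i.e.\ with $\Lambda=(1-L\partial_x^2)^{1/p+2}$ playing the role of the smoothing operator of order $1$ in the $\Gamma$-calibrated norm), $\sigma=s$, and the index choices that make $s+n-1$ land at the right regularity, I would bound
\begin{align*}
\|B(u)w\|_{s-1}=\|[\Gamma,r(u)]\Gamma^{-1}\partial_x w\|_{s-1}\leq c\,\|r(u)\|_{s}\,\|\Gamma^{-1}\partial_x w\|_{s-1}\leq c\,\|r(u)\|_{s}\,\|w\|_{s-1}.
\end{align*}
Here I would use that $\Gamma^{-1}\partial_x$ gains one derivative back, so $\|\Gamma^{-1}\partial_x w\|_{s-1}\lesssim\|w\|_{s-1}$, and that $\|r(u)\|_s$ is controlled by $\|u\|_s$ via the mapping properties of $\Gamma^{-(p+2)}[L\partial_x^2,\cdot]$ established implicitly in the analysis of $A(u)$. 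Taking the supremum over $u$ in a ball gives uniform boundedness on bounded sets, establishing $B(u)\in\mathcal L(H^{s-1})$.

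For the Lipschitz estimate, the plan is to exploit linearity of $u\mapsto r(u)$, so that $r(u)-r(v)=r(u-v)$ and hence $(B(u)-B(v))w=[\Gamma,r(u-v)]\Gamma^{-1}\partial_x w$. The same commutator estimate then yields
\begin{align*}
\|(B(u)-B(v))w\|_{s-1}\leq c\,\|r(u-v)\|_{s}\,\|w\|_{s-1}\leq \lambda_2\,\|u-v\|_{s}\,\|w\|_{s-1},
\end{align*}
where the final step uses $\|r(u-v)\|_s\lesssim\|u-v\|_s$. The main obstacle I anticipate is bookkeeping the index conditions in Proposition~\ref{ctt}: one must check that $3/2<s+n\leq\sigma$ holds for the chosen indices, which is precisely where the hypothesis $s>\frac{7}{2}+p$ is consumed (the $+p$ accounts for the order-$p$ operator $L$ sitting inside the commutator $[L\partial_x^2,\cdot]$, so the effective commutator has order $p+2$ rather than $2$). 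Care is also needed to confirm that $\Gamma^{-1}\partial_x$ and $\Gamma^{-(p+2)}[L\partial_x^2,\cdot]$ truly map into the regularity classes demanded by the estimate, but these are the same smoothing-order computations already performed for Lemmas~\ref{s-1} and~\ref{A2}, so no genuinely new difficulty arises beyond the index arithmetic.
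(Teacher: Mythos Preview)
Your proposal is correct and follows essentially the same route as the paper: rewrite $B(u)=[\Gamma,r(u)]\Gamma^{-1}\partial_x$ via the commutation of $\Gamma$ and $\partial_x$, apply the commutator estimate (Proposition~\ref{ctt}) together with the order-zero mapping property of $\Gamma^{-1}\partial_x$ to get boundedness, and then use linearity of $r$ to reduce the Lipschitz estimate to the same commutator bound applied to $r(u-v)$. The only cosmetic difference is that the paper expands $r(u)-r(v)$ explicitly rather than invoking linearity, and records the index choice as $\sigma=s-1$ (your $\sigma=s$ is in fact the choice that makes the hypothesis $s+n\le\sigma$ of Proposition~\ref{ctt} hold).
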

\begin{proof}
Note that since \(\partial_{x}\) and \(\Gamma\) commute, we can rewrite \(B(u)\) as
\begin{center}
\(B(u)=\Gamma A(u)\Gamma^{-1}-A(u)=\Gamma r(u)\partial_{x}\Gamma^{-1}-r(u)\partial_{x}=[\Gamma, r(u)]\Gamma^{-1}\partial_{x}\).
\end{center}
First, we will show that \(B(u)\) is bounded. To do that again we will use the \mbox{commutator} estimate (Proposition \ref{ctt}) with \(n=1\), \(s=s-1\), and \(\sigma=s-1\), which implies \(s+n-1=s-1\). Then, for \(f=r(u)\) and \(g=\Gamma^{-1}\partial_{x}w\), where \(w\in H^{s-1}\), we can write
\begin{align*}
||B(u)w||_{s-1} &= ||[\Gamma, r(u)]\Gamma^{-1}\partial_{x}w||_{s-1}&\\
\\
&\leq ||r(u)||_{s}||\Gamma^{-1}\partial_{x}w||_{s-1}&\\
\\
&\leq ||r(u)||_{s}||w||_{s-1}&\\
\\
 &\leq c||w||_{s-1},&
\end{align*}
where \(c\) depends on \(||u||_{s}\).\\
\\
Moreover,
\begin{align*}
||(B(u)-B(v))w||_{s-1} &= ||[\Gamma, r(u)-r(v)]\Gamma^{-1}\partial_{x}w||_{s-1}&\\
\\
&\leq ||r(u)-r(v)||_{s}||\Gamma^{-1}\partial_{x}w||_{s-1}&\\
\\
&\leq ||r(u)-r(v)||_{s}||w||_{s-1}&\\
\\
&\leq||(a+b)(u-v)+a\Gamma^{-(p+2)}[L\partial_{x}^{2},  u-v]||_{s}||w||_{s-1}&\\
\\
&\leq \big(||u-v||_{s} + || [L\partial_{x}^{2}, u-v]_{s-p-2}\big) ||w||_{s-1}&\\
\\
&\leq \big(||u-v||_{s} + ||u-v||_{s}\big) ||w||_{s-1}&\\
\\
&\leq \lambda_{2}||w||_{s-1},&
\end{align*}
where \(\lambda_{2}\) is a constant depending on \(||u||_{s}\) and \(||v||_{s}\). Take \(v=0\) in the above inequality to obtain \(B(u) \in \mathcal{L}(H^{s-1})\). This completes the proof of Lemma \ref{A3}, and thus assumption \((A3)\).
\end{proof}

\subsection*{Proof of Assumption (A4):}
Since \(f(u)=0\), it is trivial.\\
\\
As we verify all the assumptions \((A1)-(A4)\) in Theorem \ref{main}, local well-posedness for the dispersion generalized Camassa-Holm equation is established.

\section{Comparison of Regularity}\label{comp}

As we mentioned before, there are numerous studies in which different forms of momentum density appear. Starting from the Camassa-Holm equation, we will refer to some of the regularity results for the solutions of Cauchy problems associated with these different types of evolution equations. We will verify that the result we provide in this paper is consistent with the ones appeared in the literature. Generalizing the dispersive effect also generalizes the degree of the regularity obtained for the solutions. 

Below, we give sample cases:
\begin{itemize}
    \item \textbf{Case 1:} \cite{rodriguez2001cauchy} $m=1-\partial_x^2$ which leads to the classical Camassa-Holm equation for which $L$ is the identity operator, local well-posedness is proved for $s>3/2.$
    \item \textbf{Case 2:} \cite{mu2011well} $m=(1-\partial_x^2)^2 u$, which is a special case of L with $p=2$, local well-posedness is proved for $s> 7/2.$
    \item \textbf{Case 3:} \cite{wang2017-za} $m=(\mu-\partial_x^2+\partial_x^4)u$, which is a special case of L with $p=2$, local well-posedness is proved for $s>7/2.$
\end{itemize}
Checking these results, we can confirm that the result we found in this paper provide the sufficient regularity of the solution corresponding to the Cauchy problem with initial data $u_0\in H^s$ and it generalizes the results previously reported in the literature.

\section{Discussion}\label{open}
Working on one of the main qualitative analysis questions related to the dispersive equation proposed in the present work, we can declare follow up questions as future work:
\begin{itemize}
    \item [1.] Can we find globally well-posed solution?
    \item [2.] Are there any conserved quantities? Is the energy conserved as it is for the Camassa-Holm equation?
    \item [3.] Is there a finite time at which blow-up occurs? If there is finite-time blow up, which norm of $u$ becomes unbounded? Is it in the form of wave breaking as it is for the Camassa-Holm equation? How does the change in the dispersive effect change the blow-up time?
\end{itemize}
\section{Conclusion}\label{conc}

In this study, we qualitatively generalize the dispersive effect through the generalized Camassa-Holm equation and establish the local well-posedness of the corresponding Cauchy problem by using Kato's semigroup approach. There are various studies in the literature for Camassa-Holm equation since it is a dispersive equation which can model wave breaking in shallow water wave theory. The 2:1 ratio of the coefficients corresponding to nonlinear terms enables to write non-local form of CH equation in a simple manner. In our case, having the operator $L$ in a closed form and being applied on nonlinear terms as well makes things difficult. 
 The operator \(L\) with even order \(p\) represents the generalization of the dispersive effect. Choosing $L$ as the identity operator and constants $a=2$, $b=1$, the equation reduces to Camassa-Holm equation. We obtain the results by making assumptions only on the order of \(L\). So, getting those results without writing the operator \(L\) explicitly will enable to evaluate the results of the equation in a very wide class. After a large number of trials, we observe that it is needed to write the quasi-linear form of the equation by collecting all nonlinear terms in the operator \(A(u)\). Therefore, in this case, \(f\) becomes zero. Thus, it is enough to verify the assumptions \((A1)-(A3)\) of the theorem. At the end, we see that choosing the initial data \(u_{0}\) from  $H^{s}$,\ where \(s>\frac{7}{2}+p\), the local well-posedness is established. One can observe that the initial data class needs to be more regular compared to Camassa-Holm equation. 
 
\section{Acknowledgement}
N. D. Mutluba\c{s} is supported by the Turkish Academy of Sciences within the framework of the Outstanding Young Scientists Awards Program (T\"{U}BA-GEBIP-2022).

\end{document}